\newtheorem{theorem}{Theorem}
\theoremstyle{plain}
\newtheorem{definition}{Definition}
\newtheorem{lemma}{Lemma}
\numberwithin{equation}{section}
\begin{document}
\title[$g-$CONVEX DOMINATED FUNCTIONS]{ON THE CO-ORDINATED $g-$CONVEX
DOMINATED FUNCTIONS}
\author{M. Emin \"{O}zdemir$^{\blacksquare }$}
\address{$^{\blacksquare }$Atat\"{u}rk University, K.K. Education Faculty,
Department of Mathematics, 25240, ERZURUM TURKEY}
\email{emos@atauni.edu.tr}
\author{Alper Ekinci$^{\spadesuit }$}
\address{$^{\spadesuit }$A\u{g}r\i\ \.{I}brahim \c{C}e\c{c}en University,
Faculty of Science and Letters, Department of Mathematics, 04100 A\u{G}RI
TURKEY}
\email{alperekinci@hotmail.com}
\author{Ahmet Ocak Akdemir$^{\spadesuit }$}
\email{ahmetakdemir@agri.edu.tr}
\subjclass{26D15}
\keywords{$g-$convex dominated functions, co-ordinates, Hadamard Inequality.}

\begin{abstract}
In this study, we define $g-$convex dominated functions on the co-ordinates
and prove some Hadamard-type, Fejer-type inequalities for this new class of
functions. We also give some results related to the functional $H.$
\end{abstract}

\maketitle

\section{INTRODUCTION}

A function $f:I\subseteq 
\mathbb{R}
\rightarrow 
\mathbb{R}
$ is said to be convex function on $I$ if the inequality%
\begin{equation*}
f(tx+(1-t)y)\leq tf(x)+(1-t)f(y),
\end{equation*}%
holds for all $x,y\in I$ and $t\in \lbrack 0,1].$ The classical
Hermite-Hadamard inequality gives us an estimate of the mean value of a
convex function $f:I\subseteq 
\mathbb{R}
\rightarrow 
\mathbb{R}
$ which is well-known in the literature as following;%
\begin{equation*}
f\left( \frac{a+b}{2}\right) \leq \frac{1}{b-a}\dint\limits_{a}^{b}f(x)dx%
\leq \frac{f(a)+f(b)}{2}.
\end{equation*}

In \cite{5}, Dragomir defined convex functions on the co-ordinates as
following;

\begin{definition}
Let us consider the bidimensional interval $\Delta =[a,b]\times \lbrack c,d]$
in $%
\mathbb{R}
^{2}$ with $a<b,$ $c<d.$ A function $f:\Delta \rightarrow 
\mathbb{R}
$ will be called convex on the co-ordinates if the partial mappings $%
f_{y}:[a,b]\rightarrow 
\mathbb{R}
,$ $f_{y}(u)=f(u,y)$ and $f_{x}:[c,d]\rightarrow 
\mathbb{R}
,$ $f_{x}(v)=f(x,v)$ are convex where defined for all $y\in \lbrack c,d]$
and $x\in \lbrack a,b].$ Recall that the mapping $f:\Delta \rightarrow 
\mathbb{R}
$ is convex on $\Delta $ if the following inequality holds, 
\begin{equation*}
f(\lambda x+(1-\lambda )z,\lambda y+(1-\lambda )w)\leq \lambda
f(x,y)+(1-\lambda )f(z,w)
\end{equation*}%
for all $(x,y),(z,w)\in \Delta $ and $\lambda \in \lbrack 0,1].$
\end{definition}

Every convex function is co-ordinated convex but the converse is not
generally true.

In \cite{5}, Dragomir established the following inequalities of Hadamard's
type for co-ordinated convex functions on a rectangle from the plane $%
\mathbb{R}
^{2}.$

\begin{theorem}
Suppose that $f:\Delta =[a,b]\times \lbrack c,d]\rightarrow 
\mathbb{R}
$ is convex on the co-ordinates on $\Delta $. Then one has the inequalities;%
\begin{eqnarray*}
&&\ f(\frac{a+b}{2},\frac{c+d}{2}) \\
&\leq &\frac{1}{2}\left[ \frac{1}{b-a}\int_{a}^{b}f(x,\frac{c+d}{2})dx+\frac{%
1}{d-c}\int_{c}^{d}f(\frac{a+b}{2},y)dy\right] \\
&\leq &\frac{1}{(b-a)(d-c)}\int_{a}^{b}\int_{c}^{d}f(x,y)dxdy \\
&\leq &\frac{1}{4}\left[ \frac{1}{(b-a)}\int_{a}^{b}f(x,c)dx+\frac{1}{(b-a)}%
\int_{a}^{b}f(x,d)dx\right. \\
&&\left. +\frac{1}{(d-c)}\int_{c}^{d}f(a,y)dy+\frac{1}{(d-c)}%
\int_{c}^{d}f(b,y)dy\right] \\
&\leq &\frac{f(a,c)+f(a,d)+f(b,c)+f(b,d)}{4}.
\end{eqnarray*}%
The above inequalities are sharp.
\end{theorem}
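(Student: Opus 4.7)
The plan is to peel off each of the four inequalities separately, using nothing more than the one-dimensional Hermite--Hadamard inequality applied to the partial mappings $f_x$ and $f_y$, which are convex by the co-ordinate convexity hypothesis. The bookkeeping is the only real difficulty; no single step is hard.

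For the first inequality, I would apply Hermite--Hadamard to the convex partial mapping $g(y)=f(\tfrac{a+b}{2},y)$ on $[c,d]$ to obtain $f(\tfrac{a+b}{2},\tfrac{c+d}{2})\leq \tfrac{1}{d-c}\int_c^d f(\tfrac{a+b}{2},y)\,dy$, then repeat with $h(x)=f(x,\tfrac{c+d}{2})$ on $[a,b]$ to obtain the analogous bound. Averaging the two yields $(1)\leq (2)$.

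For the second inequality, I would use that, for each fixed $x\in[a,b]$, the mapping $y\mapsto f(x,y)$ is convex, so by Hermite--Hadamard $f(x,\tfrac{c+d}{2})\leq \tfrac{1}{d-c}\int_c^d f(x,y)\,dy$. Integrating in $x$ and dividing by $b-a$ gives one half of $(2)\leq(3)$; the symmetric argument, integrating first in $y$, gives the other half, and averaging finishes this step. For the third inequality I would invert the direction: for each fixed $y$, Hermite--Hadamard on $x\mapsto f(x,y)$ gives $\tfrac{1}{b-a}\int_a^b f(x,y)\,dx\leq \tfrac{f(a,y)+f(b,y)}{2}$; integrating in $y$ and dividing by $d-c$ produces one of the two upper bounds appearing in $(4)$, and the symmetric argument in the other variable produces the other. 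Averaging yields $(3)\leq (4)$.

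For the last inequality, a single one-dimensional Hermite--Hadamard applied to each of the four partial mappings $x\mapsto f(x,c)$, $x\mapsto f(x,d)$, $y\mapsto f(a,y)$, $y\mapsto f(b,y)$ bounds each boundary integral by $\tfrac{f(\cdot)+f(\cdot)}{2}$ evaluated at the appropriate corners; summing the four estimates and dividing by $4$ gives $(4)\leq (5)$, with each corner value $f(a,c), f(a,d), f(b,c), f(b,d)$ appearing exactly twice in the sum, which accounts neatly for the factor. The only point requiring care throughout is checking that each averaging combines the two ``horizontal'' and ``vertical'' applications with matching weights $\tfrac12$ so that the constants telescope into the stated form; sharpness is inherited from the sharpness of the one-dimensional Hermite--Hadamard inequality (witnessed by affine functions of $x$ and $y$).
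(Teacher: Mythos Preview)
Your argument is correct and is exactly the standard one: repeated application of the one-dimensional Hermite--Hadamard inequality to the partial mappings $f_x$ and $f_y$, followed by integrating over the remaining variable and averaging the horizontal and vertical estimates. Note, however, that the present paper does not actually prove this theorem---it is quoted as a known result from Dragomir~\cite{5}, where precisely the approach you describe is carried out; so there is nothing in this paper to compare against, but your proof matches the original.
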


In \cite{8}, Alomari and Darus proved following inequalities of Fejer-type
for co-ordinated convex functions.

\begin{theorem}
Let $f:\left[ a,b\right] \times \left[ c,d\right] \rightarrow 
\mathbb{R}
$ be a co-ordinated convex function. Then the following double inequality
holds:%
\begin{eqnarray}
f\left( \frac{a+b}{2},\frac{c+d}{2}\right) &\leq &\frac{\dint\limits_{a}^{b}%
\dint\limits_{c}^{d}f\left( x,y\right) p\left( x,y\right) dydx}{%
\dint\limits_{a}^{b}\dint\limits_{c}^{d}p\left( x,y\right) dydx}
\label{fejer co} \\
&\leq &\frac{f\left( a,c\right) +f\left( c,d\right) +f\left( b,c\right)
+f\left( b,d\right) }{4}  \notag
\end{eqnarray}%
where $p:\left[ a,b\right] \times \left[ c,d\right] \rightarrow 
\mathbb{R}
$ is positive, integrable and symmetric about $x=\frac{a+b}{2}$ and $y=\frac{%
c+d}{2}.$ The above inequalities are sharp.
\end{theorem}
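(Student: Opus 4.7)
The plan is to reduce the two-dimensional Fejer inequality to two iterated applications of the classical one-dimensional Fejer inequality, exploiting the coordinate-wise convexity of $f$ together with the axial symmetry of the weight $p$. Both bounds follow the same scheme: apply the one-dimensional Fejer estimate in the first variable with the second fixed, integrate out $y$, then apply Fejer once more in the second variable against the marginal weight $P(y) := \int_a^b p(x,y)\,dx$.

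For the left-hand inequality, I would fix $y \in [c,d]$ and apply the classical Fejer lower bound to the convex partial mapping $x \mapsto f(x,y)$ with the weight $x \mapsto p(x,y)$, which is symmetric about $(a+b)/2$ by hypothesis. This yields
\begin{equation*}
f\!\left(\tfrac{a+b}{2},y\right) \int_a^b p(x,y)\,dx \;\leq\; \int_a^b f(x,y)\,p(x,y)\,dx.
\end{equation*}
Integrating in $y$ and then invoking the one-dimensional Fejer lower bound a second time, applied to the convex function $y \mapsto f\!\left(\tfrac{a+b}{2},y\right)$ against the marginal weight $P$, produces the desired left-hand estimate. For the right-hand inequality the same scheme runs with the upper classical Fejer bound: fixing $y$ replaces $\int_a^b f(x,y)p(x,y)\,dx$ by $\tfrac{1}{2}(f(a,y)+f(b,y))\int_a^b p(x,y)\,dx$, and then applying the one-dimensional Fejer upper bound separately to the two convex mappings $y \mapsto f(a,y)$ and $y \mapsto f(b,y)$ against $P$, followed by summation and division by $\iint p$, gives the four-corner average on the right.

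The only step that is not entirely automatic is the verification that the marginal $P$ inherits symmetry about $(c+d)/2$; this is the place where the joint symmetry of $p$ is genuinely used, since Fejer in the $y$-variable requires a symmetric weight on $[c,d]$. The change of variables $y \mapsto c+d-y$ together with the identity $p(x,c+d-y)=p(x,y)$ valid pointwise in $x$ immediately yields $P(c+d-y)=P(y)$, so no real obstacle remains. Implicit uses of Fubini's theorem to interchange the orders of integration are justified by the positivity and integrability of $p$ on the compact rectangle.
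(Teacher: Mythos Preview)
Your argument is correct: iterating the one-dimensional Fejer inequality first in $x$ with $y$ frozen, and then in $y$ against the marginal weight $P(y)=\int_a^b p(x,y)\,dx$, yields both sides of the stated double inequality, and your check that $P$ inherits symmetry about $(c+d)/2$ from the axial symmetry of $p$ is exactly the point that makes the second application legitimate.

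There is, however, nothing in the present paper to compare against. Theorem~2 appears in the Introduction as a quotation of a result of Alomari and Darus \cite{8}; the authors cite it and use it (in the proof of their Theorem~7) but do not reprove it. So the paper contains no proof of this statement. Your iterated-Fejer approach is the standard route and is almost certainly the one taken in the original reference; in any case it is complete as written. One small remark: your proof relies on the separate axial symmetries $p(a+b-x,y)=p(x,y)$ and $p(x,c+d-y)=p(x,y)$, which is the natural reading of ``symmetric about $x=\tfrac{a+b}{2}$ and $y=\tfrac{c+d}{2}$'' and is indeed what the theorem requires; mere central symmetry $p(a+b-x,c+d-y)=p(x,y)$ would not suffice for the first step.
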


In \cite{2}, Dragomir \textit{et al.} defined $g-$convex dominated functions
and gave some results related to this functions as following;

\begin{definition}
Let $g:I\rightarrow 
\mathbb{R}
$ be a given convex function on the interval $I$ from $%
\mathbb{R}
.$ The real function $f:I\rightarrow 
\mathbb{R}
$ is called $g-$convex dominated on $I$ if the following condition is
satisfied:%
\begin{eqnarray*}
&&\left\vert \lambda f\left( x\right) +\left( 1-\lambda \right) f\left(
y\right) -f\left( \lambda x+\left( 1-\lambda \right) y\right) \right\vert  \\
&\leq &\lambda g\left( x\right) +\left( 1-\lambda \right) g\left( y\right)
-g\left( \lambda x+\left( 1-\lambda \right) y\right) 
\end{eqnarray*}%
for all $x,y\in I$ and $\lambda \in \left[ 0,1\right] .$ For related
Theorems and classes see:\cite{h1}, \cite{h2}, \cite{h3}.
\end{definition}

\begin{theorem}
Let $g$ be a convex function on $I$ and $f:I\rightarrow 
\mathbb{R}
.$ The following statements are equivalent:

$\left( i\right) $ $\ \ f$ is $g-$convex dominated on $I;$

$\left( ii\right) $ \ The mappings $g-f$ and $g+f$ are convex on $I;$

$\left( iii\right) $ There exist two convex mappings $h,k$ defined on $I$
such that 
\begin{equation*}
f=\frac{1}{2}\left( h-k\right) \text{ and }g=\frac{1}{2}\left( h+k\right) .
\end{equation*}
\end{theorem}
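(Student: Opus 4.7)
The plan is to establish the cycle $(i) \Leftrightarrow (ii) \Leftrightarrow (iii)$, with the first equivalence handled by unpacking the absolute value in the definition of $g$-convex dominated, and the second by explicit algebraic substitution.

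For $(i) \Leftrightarrow (ii)$, I would begin by observing that an inequality of the form $|A| \leq B$ is equivalent to the pair $-B \leq A \leq B$. Applied to the defining condition, this splits the $g$-convex dominated inequality into the two separate inequalities
\begin{equation*}
\lambda f(x) + (1-\lambda) f(y) - f(\lambda x + (1-\lambda) y) \leq \lambda g(x) + (1-\lambda) g(y) - g(\lambda x + (1-\lambda) y)
\end{equation*}
and
\begin{equation*}
-\bigl[\lambda g(x) + (1-\lambda) g(y) - g(\lambda x + (1-\lambda) y)\bigr] \leq \lambda f(x) + (1-\lambda) f(y) - f(\lambda x + (1-\lambda) y).
\end{equation*}
Rearranging the first inequality groups $g$ and $f$ on each side into the single condition that $\lambda (g-f)(x) + (1-\lambda)(g-f)(y) \geq (g-f)(\lambda x + (1-\lambda) y)$, i.e.\ $g-f$ is convex (after reversing the inequality sign appropriately); likewise the second rearranges into the condition that $g+f$ is convex. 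Since all manipulations are reversible, $(i)$ and $(ii)$ are equivalent.

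For $(ii) \Rightarrow (iii)$, I would simply set $h := g+f$ and $k := g-f$, both convex by hypothesis, and verify that $\tfrac{1}{2}(h-k) = f$ and $\tfrac{1}{2}(h+k) = g$. For the converse $(iii) \Rightarrow (ii)$, given the decompositions I would compute $g+f = h$ and $g-f = k$ directly, so the convexity of $g \pm f$ is inherited from that of $h$ and $k$.

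I do not foresee any real obstacle, as every step is a direct algebraic rearrangement. The only point requiring mild care is keeping the directions of the inequalities straight when unfolding the absolute value in the first equivalence, and ensuring that the signs work out so that exactly \emph{both} $g-f$ and $g+f$ end up convex (not one convex and one concave). Once that bookkeeping is done, the equivalence with the decomposition in $(iii)$ is essentially immediate.
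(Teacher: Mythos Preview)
Your argument is correct, but note that the paper itself does not actually prove this theorem: it is quoted in the Introduction as a known result from Dragomir, Pearce and Pe\v{c}ari\'{c} (reference~[2]), so there is no in-paper proof to compare against. Your route---splitting the absolute-value condition $|A|\le B$ into the two one-sided inequalities and rearranging each into the convexity of $g-f$ and $g+f$ respectively, then handling $(ii)\Leftrightarrow(iii)$ via the explicit choice $h=g+f$, $k=g-f$---is the standard argument and is almost certainly what the cited source contains. One cosmetic remark: the inequality $\lambda(g-f)(x)+(1-\lambda)(g-f)(y)\ge(g-f)(\lambda x+(1-\lambda)y)$ that you obtain is already the convexity of $g-f$ as written, so your parenthetical about ``reversing the inequality sign appropriately'' is unnecessary.
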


In \cite{5}, Dragomir considered a mapping which closely connected with
above inequalities and established main properties of this mapping as
following:

Now, for a mapping $f:\Delta :=\left[ a,b\right] \times \left[ c,d\right]
\rightarrow 
\mathbb{R}
$ is convex on the co-ordinates on $\Delta ,$ we can define the mapping $H:%
\left[ 0,1\right] ^{2}\rightarrow 
\mathbb{R}
,$%
\begin{equation*}
H(t,s):=\frac{1}{\left( b-a\right) \left( d-c\right) }\dint\limits_{a}^{b}%
\dint\limits_{c}^{d}f\left( tx+(1-t)\frac{a+b}{2},sy+(1-s)\frac{c+d}{2}%
\right) dxdy
\end{equation*}

\begin{theorem}
\lbrack See \cite{5}] Suppose that $f:\Delta \subset 
\mathbb{R}
^{2}\rightarrow 
\mathbb{R}
$ is convex on the co-ordinates on $\Delta =\left[ a,b\right] \times \left[
c,d\right] .$ Then:

(i) The mapping $H$ is convex on the co-ordinates on $\left[ 0,1\right]
^{2}. $

(ii) We have the bounds%
\begin{equation*}
\sup_{(t,s)\in \left[ 0,1\right] ^{2}}H(t,s)=\frac{1}{\left( b-a\right)
\left( d-c\right) }\dint\limits_{a}^{b}\dint\limits_{c}^{d}f\left(
x,y\right) dxdy=H(1,1)
\end{equation*}%
\begin{equation*}
\inf_{(t,s)\in \left[ 0,1\right] ^{2}}H(t,s)=f\left( \frac{a+b}{2},\frac{c+d%
}{2}\right) =H(0,0)
\end{equation*}

(iii) The mapping $H$ is monotonic nondecreasing on the co-ordinates.
\end{theorem}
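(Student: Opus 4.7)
The plan is to reduce every claim to one-dimensional arguments, since $f$ is only convex on the co-ordinates and not jointly. For (i), fix $s\in[0,1]$ and take $\alpha,\beta,\lambda\in[0,1]$. Using the linear identity
$(\lambda\alpha+(1-\lambda)\beta)x+(1-\lambda\alpha-(1-\lambda)\beta)\frac{a+b}{2}=\lambda[\alpha x+(1-\alpha)\frac{a+b}{2}]+(1-\lambda)[\beta x+(1-\beta)\frac{a+b}{2}]$
together with convexity of $f$ in its first argument (for fixed second argument $sy+(1-s)\frac{c+d}{2}$), I bound the integrand of $H(\lambda\alpha+(1-\lambda)\beta,s)$ pointwise and integrate over $\Delta$ to obtain $H(\lambda\alpha+(1-\lambda)\beta,s)\leq\lambda H(\alpha,s)+(1-\lambda)H(\beta,s)$. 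Convexity in $s$ is symmetric.

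For (ii), the equalities at $(1,1)$ and $(0,0)$ are immediate from the definition. For the lower bound I apply Jensen's inequality in each co-ordinate separately: since $\frac{1}{b-a}\int_a^b(tx+(1-t)\frac{a+b}{2})\,dx=\frac{a+b}{2}$ for every $t$, convexity of $f(\cdot,z)$ yields $\frac{1}{b-a}\int_a^b f(tx+(1-t)\frac{a+b}{2},z)\,dx\geq f(\frac{a+b}{2},z)$ for every $z$; a second Jensen step in $y$ gives $H(t,s)\geq H(0,0)$. For the upper bound I use part (i) to write $H(t,s)\leq t\,H(1,s)+(1-t)H(0,s)$ and then, applying the same in the second argument, $H(1,s)\leq s\,H(1,1)+(1-s)H(1,0)$ and $H(0,s)\leq s\,H(0,1)+(1-s)H(0,0)$. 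It then suffices to verify $H(1,0), H(0,1), H(0,0)\leq H(1,1)$; each of these reduces to a one-variable Hermite-Hadamard inequality (e.g. $f(x,\frac{c+d}{2})\leq\frac{1}{d-c}\int_c^d f(x,y)\,dy$) followed by integration in the other variable.

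For (iii), fix $s$. By (i) the mapping $t\mapsto H(t,s)$ is convex on $[0,1]$, and the Jensen step from the lower-bound argument actually produces the sharper statement $H(t,s)\geq H(0,s)$ for every $t$. A convex function on $[0,1]$ whose minimum is attained at $0$ must be nondecreasing, since for $0<t_1<t_2$ writing $t_1=(1-t_1/t_2)\cdot 0+(t_1/t_2)t_2$ and invoking $H(0,s)\leq H(t_1,s)$ forces $H(t_1,s)\leq H(t_2,s)$. Monotonicity in $s$ follows by the symmetric argument.

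The main subtlety throughout is that co-ordinated convexity is strictly weaker than joint convexity, so Jensen's inequality and Hermite-Hadamard cannot be applied to the two-dimensional integral in a single stroke. Every inequality must first be carried out in one variable with the other frozen, and then integrated; the payoff is that the three parts all follow from the same short list of reusable one-dimensional building blocks already available in the paper.
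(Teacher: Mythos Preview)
The paper under review does not supply its own proof of this theorem: it is quoted verbatim from Dragomir \cite{5} with the parenthetical ``[See \cite{5}]'' and no argument. Consequently there is nothing in the present paper to compare your attempt against.

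That said, your argument is correct and is essentially the approach taken in the original source \cite{5}. Part (i) is exactly the pointwise-then-integrate argument used there. For (ii) your Jensen step for the infimum and your corner-reduction via (i) for the supremum are standard; in fact the monotonicity established in (iii) already delivers both bounds, so one commonly proves (iii) before (ii). In (iii) your claim that a convex function on $[0,1]$ attaining its minimum at $0$ is nondecreasing is the right lemma; the one-line justification works once you combine the convexity inequality $H(t_1,s)\le(1-t_1/t_2)H(0,s)+(t_1/t_2)H(t_2,s)$ with $H(0,s)\le H(t_1,s)$ and rearrange (your sketch compresses this slightly but the algebra goes through). Nothing is missing.
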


In this study, we define $g-$ convex dominated functions on $\Delta $ on the
co-ordinates and establish some inequalities of Hadamard-type and Fejer-type
for this class of functions. We also give some results for the functional $%
H. $

\section{MAIN RESULTS}

We will start with the following definition.

\begin{definition}
Let $\Delta =\left[ a,b\right] \times \left[ c,d\right] $ and $g:\Delta
\rightarrow 
\mathbb{R}
$ be a convex function. The real function $f:\Delta \rightarrow 
\mathbb{R}
$ is called a $g-$ convex dominated function on $\Delta $ if%
\begin{eqnarray*}
&&\left\vert \lambda f\left( x,y\right) +\left( 1-\lambda \right) f\left(
z,w\right) -f\left( \lambda x+\left( 1-\lambda \right) z,\lambda y+\left(
1-\lambda \right) w\right) \right\vert \\
&\leq &\lambda g\left( x,y\right) +\left( 1-\lambda \right) g\left(
z,w\right) -g\left( \lambda x+\left( 1-\lambda \right) z,\lambda y+\left(
1-\lambda \right) w\right)
\end{eqnarray*}%
for all $\lambda \in \left[ 0,1\right] $ and $\left( x,y\right) ,\left(
z,w\right) \in \Delta .$
\end{definition}

\begin{definition}
Suppose that $g:\Delta \rightarrow 
\mathbb{R}
$ be a convex function. A function $f:\Delta \rightarrow 
\mathbb{R}
$ is called $g-$convex dominated on co-ordinates if the partial mappings $%
f_{x}:\left[ c,d\right] \rightarrow 
\mathbb{R}
,$ $f_{x}\left( u\right) :=f\left( x,v\right) $ is $g_{x}-$ convex dominated
on $\left[ c,d\right] $ and $f_{y}:\left[ a,b\right] \rightarrow 
\mathbb{R}
,$ $f_{y}\left( w\right) :=f\left( w,y\right) $ is $g_{y}-$ convex dominated
on $\left[ a,b\right] $ where $g_{x}:\left[ c,d\right] \rightarrow 
\mathbb{R}
,$ $g_{x}\left( u\right) :=g\left( x,u\right) $ and $g_{y}:\left[ a,b\right]
\rightarrow 
\mathbb{R}
,$ $g_{y}\left( w\right) :=g\left( w,y\right) $.
\end{definition}

\begin{lemma}
Let $g:\Delta \rightarrow 
\mathbb{R}
$ be a convex function. Every $g-$convex dominated mapping on $\Delta $ is $%
g-$convex dominated on the co-ordinates but the converse may not be
necessarily true$.$
\end{lemma}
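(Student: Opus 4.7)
The plan is to prove the forward implication by specializing the two--variable defining inequality to pairs of points that share one coordinate, and to refute the converse with a simple bilinear example.

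For the forward direction, I would assume $f$ is $g-$convex dominated on $\Delta $, fix $x\in [a,b]$, $u,v\in [c,d]$ and $\lambda \in [0,1]$, and apply the definition to the two points $(x,u),(x,v)\in \Delta $. Because the first coordinates agree, $\lambda x+(1-\lambda )x=x$, so both the $f-$ and $g-$terms involving the convex combination reduce to values at the first slot $x$, and the defining inequality collapses to
\begin{equation*}
\left\vert \lambda f(x,u)+(1-\lambda )f(x,v)-f(x,\lambda u+(1-\lambda )v)\right\vert \leq \lambda g(x,u)+(1-\lambda )g(x,v)-g(x,\lambda u+(1-\lambda )v),
\end{equation*}
which, read through the definitions of $f_{x}$ and $g_{x}$, is precisely the $g_{x}-$convex domination of $f_{x}$ on $[c,d]$. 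The symmetric choice of $(u,y),(v,y)$ with $y\in [c,d]$ fixed yields $g_{y}-$convex domination of $f_{y}$ on $[a,b]$, completing the implication.

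For the converse, I would exhibit the bilinear example $\Delta =[0,1]^{2}$, $f(x,y)=xy$ together with the convex (identically zero) dominator $g\equiv 0$. The partial mappings $f_{x}(v)=xv$ and $f_{y}(u)=uy$ are affine in their free variable, so the absolute value on the left--hand side of each one--dimensional domination inequality vanishes identically; the right--hand side also vanishes since $g_{x}\equiv g_{y}\equiv 0$. Hence $f$ is $g-$convex dominated on the co-ordinates. Substituting $(x,y)=(0,0)$, $(z,w)=(1,1)$ and $\lambda =\tfrac{1}{2}$ into the global definition, however, gives left--hand side $\left\vert 0+\tfrac{1}{2}-\tfrac{1}{4}\right\vert =\tfrac{1}{4}$ and right--hand side $0$, so the global $g-$convex domination fails.

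No substantive obstacle is expected: the forward part is a one--line substitution in the two--variable inequality, and the counterexample is elementary. The only point worth flagging is that allowing $g\equiv 0$ (which is convex) forces the two one--dimensional conditions to hold trivially whenever the partial mappings are affine, while the global condition with $g\equiv 0$ would require $f$ itself to be affine on $\Delta $; a bilinear, non--affine choice such as $xy$ therefore separates the two notions cleanly.
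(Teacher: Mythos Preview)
Your proof is correct and follows essentially the same approach as the paper: the forward implication is obtained by specializing the two--variable inequality to points with a common coordinate, and the counterexample uses $f(x,y)=xy$ on $[0,1]^{2}$. The only difference is that the paper takes $g(x,y)=x+y$ rather than $g\equiv 0$, but since both choices are affine the right--hand side vanishes in either case and the verification is identical.
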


\begin{proof}
Suppose that $f:\Delta \rightarrow 
\mathbb{R}
$ is $g-$convex dominated on $\Delta $. Consider $f_{x}:\left[ c,d\right]
\rightarrow 
\mathbb{R}
,$ $f_{x}\left( u\right) :=f\left( x,u\right) $ and $g_{x}:\left[ c,d\right]
\rightarrow 
\mathbb{R}
,$ $g_{x}\left( u\right) :=g\left( x,u\right) .$ Then for all $\lambda \in %
\left[ 0,1\right] $ and $u,w\in \left[ c,d\right] ,$ we have 
\begin{eqnarray*}
&&\left\vert \lambda f\left( x,u\right) +(1-\lambda )f\left( y,w\right)
-f\left( \lambda x+\left( 1-\lambda \right) x,\lambda u+\left( 1-\lambda
\right) w\right) \right\vert  \\
&\leq &\lambda g\left( x,u\right) +(1-\lambda )g\left( y,w\right) -g\left(
\lambda x+\left( 1-\lambda \right) x,\lambda u+\left( 1-\lambda \right)
w\right) 
\end{eqnarray*}%
then we can write%
\begin{eqnarray}
&=&\left\vert \lambda f\left( x,u\right) +(1-\lambda )f\left( y,w\right)
-f\left( x,\lambda u+\left( 1-\lambda \right) w\right) \right\vert 
\label{l-p-1} \\
&\leq &\lambda g\left( x,u\right) +(1-\lambda )g\left( y,w\right) -g\left(
x,\lambda u+\left( 1-\lambda \right) w\right)   \notag
\end{eqnarray}%
it follows that%
\begin{eqnarray}
&&\left\vert \lambda f_{x}\left( u\right) +(1-\lambda )f_{x}\left( w\right)
-f_{x}\left( \lambda u+\left( 1-\lambda \right) w\right) \right\vert 
\label{l-p-2} \\
&\leq &\lambda g_{x}\left( u\right) +(1-\lambda )g_{x}\left( w\right)
-g_{x}\left( \lambda u+\left( 1-\lambda \right) w\right) .  \notag
\end{eqnarray}%
We can show that this results also hold for $f_{y}.$This completes the
proof. Now consider the mappings $g_{0}(x,y)=x+y$ and $f_{0}(x,y)=xy$ where $%
\Delta :=\left[ 0,1\right] \times \left[ 0,1\right] .$ $g_{0}$ is convex on $%
\Delta $ and $f_{0}$ is $g_{0}-$ convex dominated on the co-ordinates but it
can be easily seen that the mapping $f_{0}$ is not $g_{0}-convex$ dominated
on $\Delta .$
\end{proof}

\begin{lemma}
\label{co-g-lemma2} Let $g$ be a convex function on $\Delta $ and $f:\Delta
\rightarrow 
\mathbb{R}
$. If $f$ is $g-$convex dominated on the co-ordinates, the mappings $g-f$
and $g+f$ are convex on the co-ordinates.
\end{lemma}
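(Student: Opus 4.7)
The plan is to reduce the claim to the one-variable equivalence already recorded in the paper (the theorem stating that $f$ is $g$-convex dominated on $I$ iff $g-f$ and $g+f$ are convex on $I$) by going through the partial mappings, since co-ordinated convexity and co-ordinated $g$-convex domination are both defined in terms of those partial mappings.

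First I would fix an arbitrary $x\in[a,b]$ and look at the partial mapping $(g\pm f)_x:[c,d]\to\mathbb{R}$ defined by $(g\pm f)_x(u):=g(x,u)\pm f(x,u)$. The key observation is the trivial identity
\begin{equation*}
(g\pm f)_x(u)=g_x(u)\pm f_x(u),
\end{equation*}
so that the partial mapping of the sum/difference equals the sum/difference of the partial mappings. By the hypothesis that $f$ is $g$-convex dominated on the co-ordinates, $f_x$ is $g_x$-convex dominated on $[c,d]$, and $g_x$ is convex on $[c,d]$ (as a partial mapping of the convex function $g$). Applying the one-variable equivalence of Dragomir quoted in the excerpt, both $g_x-f_x$ and $g_x+f_x$ are convex on $[c,d]$. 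Hence $(g-f)_x$ and $(g+f)_x$ are convex on $[c,d]$.

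Next, repeating the same argument with the roles of the variables interchanged, I fix $y\in[c,d]$ and consider $(g\pm f)_y(w)=g_y(w)\pm f_y(w)$ on $[a,b]$. Since $f_y$ is $g_y$-convex dominated on $[a,b]$ and $g_y$ is convex there, the one-variable equivalence again yields convexity of $g_y-f_y$ and $g_y+f_y$, hence of $(g-f)_y$ and $(g+f)_y$ on $[a,b]$.

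Having shown that both families of partial mappings of $g-f$ and of $g+f$ are convex, the definition of convexity on the co-ordinates (as recalled in Definition~1 of the introduction) immediately gives that $g-f$ and $g+f$ are convex on the co-ordinates on $\Delta$, completing the proof. I do not expect any real obstacle here: the whole argument is a bookkeeping reduction to the known one-variable statement, and the only point to be careful about is the identification $(g\pm f)_x=g_x\pm f_x$, which is immediate from the definitions of the partial mappings.
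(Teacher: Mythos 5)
Your proof is correct and follows essentially the same route as the paper: reduce to the partial mappings, use the hypothesis that $f_x$ is $g_x$-convex dominated (and likewise for $f_y$), and invoke the one-variable equivalence to get convexity of $g_x\pm f_x$ and $g_y\pm f_y$. If anything, you are more careful than the paper, which cites ``Lemma 1'' at the key step where the one-variable equivalence (Theorem 3 of the introduction) is what is actually needed.
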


\begin{proof}
From the fact that $f_{x}$ is $g_{x}-$convex dominated we have 
\begin{eqnarray*}
&&\lambda g_{x}\left( u\right) +(1-\lambda )g_{x}\left( w\right)
-g_{x}\left( \lambda u+\left( 1-\lambda \right) w\right)  \\
&\geq &\left\vert \lambda f_{x}\left( u\right) +(1-\lambda )f_{x}\left(
w\right) -f_{x}\left( \lambda u+\left( 1-\lambda \right) w\right)
\right\vert .
\end{eqnarray*}%
From Lemma 1, we have $\left( g-f\right) _{x}$ and $\left( g+f\right) _{x}$
are convex on the co-ordinates for all $\lambda \in \left[ 0,1\right] $ and $%
u,w\in \left[ c,d\right] .$ Similarly we can show that $\left( g-f\right)
_{y}$ and $\left( g+f\right) _{y}$ are also convex on the co-ordinates. This
completes proof.
\end{proof}

\begin{theorem}
Let $g:\Delta =\left[ a,b\right] \times \left[ c,d\right] \rightarrow R$ be
a co-ordinated convex mapping on $\Delta $ and $f:\Delta =\left[ a,b\right]
\times \left[ c,d\right] \rightarrow R$ is a co-ordinated $g-$%
convex-dominated mapping, where $a<b$ and $c<d$. Then, one has the
inequalities:%
\begin{eqnarray*}
&&\left\vert \frac{1}{\left( b-a\right) \left( d-c\right) }%
\int\limits_{a}^{b}\int\limits_{c}^{d}f\left( x,y\right) dydx-f\left( \frac{%
a+b}{2},\frac{c+d}{2}\right) \right\vert \\
&\leq &\frac{1}{\left( b-a\right) \left( d-c\right) }\int\limits_{a}^{b}\int%
\limits_{c}^{d}g\left( x,y\right) dydx-g\left( \frac{a+b}{2},\frac{c+d}{2}%
\right)
\end{eqnarray*}%
and%
\begin{eqnarray*}
&&\left\vert \frac{f\left( a,c\right) +f\left( a,d\right) +f\left(
b,c\right) +f\left( b,d\right) }{4}-\frac{1}{\left( b-a\right) \left(
d-c\right) }\int\limits_{a}^{b}\int\limits_{c}^{d}f\left( x,y\right)
dydx\right\vert \\
&\leq &\frac{g\left( a,c\right) +g\left( a,d\right) +g\left( b,c\right)
+g\left( b,d\right) }{4}-\frac{1}{\left( b-a\right) \left( d-c\right) }%
\int\limits_{a}^{b}\int\limits_{c}^{d}g\left( x,y\right) dydx.
\end{eqnarray*}
\end{theorem}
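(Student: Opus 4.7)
The plan is to reduce everything to the co-ordinated Hermite--Hadamard inequality of Theorem 1, using the characterization supplied by Lemma \ref{co-g-lemma2}. Since $f$ is $g$-convex dominated on the co-ordinates, Lemma \ref{co-g-lemma2} guarantees that both auxiliary mappings $g+f$ and $g-f$ are convex on the co-ordinates on $\Delta$. This is the only structural input we need; after that, the proof is a symmetry/sign bookkeeping exercise.

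For the first inequality, I would apply the leftmost estimate of Theorem 1 (the one between $f(\tfrac{a+b}{2},\tfrac{c+d}{2})$ and the double integral mean) separately to $g+f$ and to $g-f$. The $g+f$ case yields
\begin{equation*}
f\!\left(\tfrac{a+b}{2},\tfrac{c+d}{2}\right)-\tfrac{1}{(b-a)(d-c)}\!\int_a^b\!\int_c^d f(x,y)\,dy\,dx \leq \tfrac{1}{(b-a)(d-c)}\!\int_a^b\!\int_c^d g(x,y)\,dy\,dx - g\!\left(\tfrac{a+b}{2},\tfrac{c+d}{2}\right),
\end{equation*}
while the $g-f$ case yields the same bound with the sign of the $f$-terms reversed. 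Combining the two one-sided estimates gives the required absolute value inequality.

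For the second inequality the strategy is identical, but using the rightmost estimate of Theorem 1 (the one between the double integral mean and the arithmetic mean of the four corner values). Applied to $g+f$ and $g-f$, two symmetric one-sided bounds fall out, and their combination gives the second absolute value estimate exactly as stated.

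The only thing to be careful about is the sign bookkeeping when rewriting each application of Theorem 1 as an inequality for $f$ controlled by the analogous expression for $g$; there is no analytic obstacle, since the co-ordinated convexity of $g\pm f$ has already been secured by Lemma \ref{co-g-lemma2}. I therefore expect the proof to be essentially two short paragraphs, one for each claimed inequality.
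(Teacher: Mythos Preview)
Your proposal is correct, but it differs from the paper's argument. The paper does \emph{not} invoke Theorem~1 at all; instead it works pointwise from the definition of $g$-convex domination. For the first inequality the authors take $\lambda=\tfrac12$ in the defining estimate, substitute the symmetric parametrization $x=ta+(1-t)b$, $z=(1-t)a+tb$, $y=sc+(1-s)d$, $w=(1-s)c+sd$, and integrate over $(t,s)\in[0,1]^2$; for the second they integrate the two-parameter corner inequality
\[
\bigl|\,ts\,f(a,c)+t(1-s)f(a,d)+(1-t)s\,f(b,c)+(1-t)(1-s)f(b,d)-f(ta+(1-t)b,\,sc+(1-s)d)\bigr|\le\ \cdots
\]
over $[0,1]^2$. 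Your route via Lemma~\ref{co-g-lemma2} and Theorem~1 is cleaner and more modular: it reuses the already-established co-ordinated Hermite--Hadamard bounds rather than rederiving them, and it avoids the change-of-variables bookkeeping. The paper's direct computation, on the other hand, does not rely on Theorem~1 being available and makes the mechanism explicit. It is worth noting that the authors themselves adopt exactly your strategy (apply Lemma~\ref{co-g-lemma2}, then feed $g\pm f$ into a known inequality for co-ordinated convex functions) when proving the Fej\'er-type analogue later in the paper, so your approach is entirely in the spirit of the work.
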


\begin{proof}
From Lemma 2, we can write%
\begin{eqnarray*}
&&\left\vert \frac{f\left( x,y\right) +f\left( z,w\right) }{2}-f\left( \frac{%
x+z}{2},\frac{y+w}{2}\right) \right\vert \\
&\leq &\frac{g\left( x,y\right) +g\left( z,w\right) }{2}-g\left( \frac{x+z}{2%
},\frac{y+w}{2}\right)
\end{eqnarray*}%
for all $\left( x,y\right) ,\left( z,w\right) \in \Delta .$ Set $%
x=ta+(1-t)b, $ $z=(1-t)a+tb,$ $y=tc+(1-t)d$ and $w=(1-t)c+td$ for all $%
t,s\in \left[ 0,1\right] .$ Then we get%
\begin{eqnarray*}
&&\left\vert \frac{f\left( ta+(1-t)b,tc+(1-t)d\right) +f\left(
(1-t)a+tb,(1-t)c+td\right) }{2}-f\left( \frac{a+b}{2},\frac{c+d}{2}\right)
\right\vert \\
&\leq &\frac{g\left( ta+(1-t)b,tc+(1-t)d\right) +g\left(
(1-t)a+tb,(1-t)c+td\right) }{2}-g\left( \frac{a+b}{2},\frac{c+d}{2}\right) .
\end{eqnarray*}%
By integrating with respect to $t$ over $\left[ 0,1\right] ^{2},$ we obtain
the desired result.%
\begin{eqnarray*}
&&\left\vert \frac{\int\limits_{0}^{1}\int\limits_{0}^{1}f\left(
ta+(1-t)b,tc+(1-t)d\right) dtds}{2}\right. \\
&&\left. +\frac{\int\limits_{0}^{1}\int\limits_{0}^{1}f\left(
(1-t)a+tb,(1-s)c+sd\right) dtds}{2}-f\left( \frac{a+b}{2},\frac{c+d}{2}%
\right) \right\vert \\
&\leq &\frac{\int\limits_{0}^{1}\int\limits_{0}^{1}g\left(
ta+(1-t)b,sc+(1-s)d\right) dtds}{2} \\
&&+\frac{\int\limits_{0}^{1}\int\limits_{0}^{1}g\left(
(1-t)a+tb,(1-s)c+sd\right) dtds}{2}.-g\left( \frac{a+b}{2},\frac{c+d}{2}%
\right)
\end{eqnarray*}%
Which completes the proof of the first inequality. From the definition of $%
g- $convex-dominated functions, we can write 
\begin{eqnarray*}
&&\left\vert tsf\left( a,c\right) +t(1-s)f\left( a,d\right)
+s(1-t)f(b,c)\right. \\
&&\left. +(1-t)(1-s)f(b,d)-f\left( ta+(1-t)b,sc+(1-s)d\right) \right\vert \\
&\leq &tsg\left( a,c\right) +t(1-s)g\left( a,d\right) +s(1-t)g(b,c) \\
&&+(1-t)(1-s)g(b,d)-g\left( ta+(1-t)b,sc+(1-s)d\right) .
\end{eqnarray*}%
By integrating with respect to $t$ over $\left[ 0,1\right] ^{2},$ we obtain%
\begin{eqnarray*}
&&\left\vert \frac{f\left( a,c\right) +f\left( a,d\right) +f\left(
b,c\right) +f\left( b,d\right) }{4}-\frac{1}{\left( b-a\right) \left(
d-c\right) }\int\limits_{a}^{b}\int\limits_{c}^{d}f\left( x,y\right)
dydx\right\vert \\
&\leq &\frac{g\left( a,c\right) +g\left( a,d\right) +g\left( b,c\right)
+g\left( b,d\right) }{4}-\frac{1}{\left( b-a\right) \left( d-c\right) }%
\int\limits_{a}^{b}\int\limits_{c}^{d}g\left( x,y\right) dydx.
\end{eqnarray*}%
Which completes the proof.
\end{proof}

We will prove some properties of the mapping $H$ in the following theorem.

\begin{theorem}
Let $g:\Delta =\left[ a,b\right] \times \left[ c,d\right] \rightarrow R$ be
a co-ordinated convex mapping on $\Delta $ and $f:\Delta =\left[ a,b\right]
\times \left[ c,d\right] \rightarrow R$ is a co-ordinated $g-$%
convex-dominated mapping, where $a<b$ and $c<d$. Then:

(i) $H_{f}$ is $H_{g}-$convex dominated on $[0,1]$.

(ii) One has the inequalities%
\begin{equation}
0\leq |H_{f}(t_{2},s_{2})-H_{f}(t_{1},s_{1})|\leq
H_{g}(t_{2},s_{2})-H_{g}(t_{1},s_{1})  \label{w}
\end{equation}

for all $0\leq t_{2}<t_{1}\leq 1$ and $0\leq s_{2}<s_{1}\leq 1.$

(iii) One has the inequalities%
\begin{equation*}
\left\vert f\left( \frac{a+b}{2},\frac{c+d}{2}\right) -H_{f}(t,s)\right\vert
\leq H_{g}(t,s)-g\left( \frac{a+b}{2},\frac{c+d}{2}\right)
\end{equation*}%
and%
\begin{eqnarray*}
&&\left\vert \frac{1}{\left( b-a\right) \left( d-c\right) }%
\int\limits_{a}^{b}\int\limits_{c}^{d}f\left( x,y\right)
dydx-H_{f}(t,s)\right\vert \\
&\leq &H_{g}(t,s)-\frac{1}{\left( b-a\right) \left( d-c\right) }%
\int\limits_{a}^{b}\int\limits_{c}^{d}g\left( x,y\right) dydx
\end{eqnarray*}%
for all $t,s\in \left[ 0,1\right] .$
\end{theorem}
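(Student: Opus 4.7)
The plan is to derive all three parts from the corresponding properties of $H$ for co-ordinated convex mappings (Dragomir's theorem recalled in the Introduction) together with Lemma~\ref{co-g-lemma2}. Under the hypothesis, Lemma~\ref{co-g-lemma2} tells us that the auxiliary mappings $g+f$ and $g-f$ are both convex on the co-ordinates on $\Delta$. Since $H$ is linear in its underlying function, we have the key identities
\begin{equation*}
H_{g+f}=H_g+H_f,\qquad H_{g-f}=H_g-H_f.
\end{equation*}
Applying Dragomir's theorem on $H$ separately to the co-ordinated convex functions $g+f$ and $g-f$ then tells us at once that both $H_g+H_f$ and $H_g-H_f$ are convex on the co-ordinates on $[0,1]^2$, are monotonic nondecreasing on the co-ordinates, and attain their infimum at $(0,0)$ and supremum at $(1,1)$.

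For (i), I would combine the two co-ordinate convexity statements for $H_g\pm H_f$. Writing convexity of $H_g-H_f$ as the upper bound and convexity of $H_g+H_f$ (after multiplying by $-1$) as the lower bound on the quantity $\lambda H_f(u,\cdot)+(1-\lambda)H_f(w,\cdot)-H_f(\lambda u+(1-\lambda)w,\cdot)$ in each co-ordinate, and then taking absolute values, one obtains the $H_g$-domination of $H_f$ on the co-ordinates. This is exactly the converse direction of the reasoning in Lemma~\ref{co-g-lemma2}, an immediate add-and-subtract argument analogous to $(ii)\Rightarrow(i)$ in the one-dimensional characterization recalled earlier.

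For (ii), the monotonic nondecreasing property of $H_g+H_f$ and $H_g-H_f$ on the co-ordinates yields, for the relevant ordering of $(t_1,s_1)$ and $(t_2,s_2)$ along both co-ordinates,
\begin{equation*}
-\bigl(H_g(t_1,s_1)-H_g(t_2,s_2)\bigr)\le H_f(t_1,s_1)-H_f(t_2,s_2)\le H_g(t_1,s_1)-H_g(t_2,s_2),
\end{equation*}
so inequality \eqref{w} follows by taking absolute values.

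For (iii), I would invoke the two extremal bounds of Dragomir's theorem applied to $g\pm f$. The inequality $H_{g\pm f}(t,s)\ge H_{g\pm f}(0,0)=(g\pm f)\bigl(\tfrac{a+b}{2},\tfrac{c+d}{2}\bigr)$, after the two sign choices are added and subtracted, rearranges to the first bound in (iii). The inequality $H_{g\pm f}(t,s)\le H_{g\pm f}(1,1)=\frac{1}{(b-a)(d-c)}\int_a^b\!\!\int_c^d(g\pm f)(x,y)\,dy\,dx$ rearranges in the same way to the second. No step is genuinely hard; the only point that needs explicit attention is recognising and invoking the converse direction of Lemma~\ref{co-g-lemma2} (namely, convexity on the co-ordinates of $g\pm f$ implies $g$-domination on the co-ordinates of $f$), which follows from the standard sum/difference manipulation.
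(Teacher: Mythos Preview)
Your proposal is correct and follows essentially the same approach as the paper: use Lemma~\ref{co-g-lemma2} to obtain co-ordinate convexity of $g\pm f$, invoke linearity of $f\mapsto H_f$ to write $H_{g\pm f}=H_g\pm H_f$, and then apply Dragomir's theorem on $H$ to the co-ordinated convex functions $g\pm f$ for each of (i)--(iii). The only cosmetic difference is that the paper obtains (iii) by specializing inequality~\eqref{w} at $(0,0)$ and $(1,1)$, whereas you invoke the $\inf$/$\sup$ bounds for $H_{g\pm f}$ directly; these are the same computation.
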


\begin{proof}
(i) Since $f$ is co-ordinated $g-$convex-dominated mapping on $\Delta ,$ we
know that from Lemma 2, $g-f$ and $g+f$ are also convex on $\Delta $. By
using the linearity of the mapping $f\rightarrow H_{f},$ we can write $%
H_{g-f}=H_{g}-H_{f}$ and $H_{g+f}=H_{g}+H_{f}$. Then, it is easy to see that 
$H_{f}$ is $H_{g}-$convex dominated on $[0,1]$.

(ii) By Theorem 4, the mappings $H_{g-f}$ and $H_{g+f}$ are monotonic
nondecreasing on the co-ordinates. So, we have%
\begin{equation*}
H_{g}(t_{1},s_{1})-H_{f}(t_{1},s_{1})=H_{g-f}(t_{1},s_{1})\leq
H_{g-f}(t_{2},s_{2})=H_{g}(t_{2},s_{2})-H_{f}(t_{2},s_{2})
\end{equation*}%
and%
\begin{equation*}
H_{g}(t_{1},s_{1})+H_{f}(t_{1},s_{1})=H_{g+f}(t_{1},s_{1})\leq
H_{g+f}(t_{2},s_{2})=H_{g}(t_{2},s_{2})+H_{f}(t_{2},s_{2}).
\end{equation*}%
Then, we obtain%
\begin{equation*}
H_{f}(t_{2},s_{2})-H_{f}(t_{1},s_{1})\leq
H_{g}(t_{2},s_{2})-H_{g}(t_{1},s_{1})
\end{equation*}%
and%
\begin{equation*}
H_{f}(t_{1},s_{1})-H_{f}(t_{2},s_{2})\leq
H_{g}(t_{2},s_{2})-H_{g}(t_{1},s_{1}).
\end{equation*}%
Which is the desired result.

(iii) If we choose $t=s=0$ in (\ref{w}), we have the first inequality and by
a similar argument if we choose $t=s=1$, we obtain the second inequality.
\end{proof}

We will establish inequalities of Fejer-type for $g-$convex dominated
function on the co-ordinates in the following theorem.

\begin{theorem}
Let $g:\Delta \rightarrow 
\mathbb{R}
$ be a convex function on the co-ordinates and $f:\Delta \rightarrow 
\mathbb{R}
$ be a $g-$convex dominated function on the co-ordinates where $\Delta :=%
\left[ a,b\right] \times \left[ c,d\right] .$ Then the following
inequalities hold:%
\begin{eqnarray*}
&&\left\vert f\left( \frac{a+b}{2},\frac{c+d}{2}\right) -\frac{%
\dint\limits_{a}^{b}\dint\limits_{c}^{d}f(x,y)p\left( x,y\right) dydx}{%
\dint\limits_{a}^{b}\dint\limits_{c}^{d}p\left( x,y\right) dydx}\right\vert
\\
&\leq &g\left( \frac{a+b}{2},\frac{c+d}{2}\right) -\frac{\dint%
\limits_{a}^{b}\dint\limits_{c}^{d}g(x,y)p\left( x,y\right) dydx}{%
\dint\limits_{a}^{b}\dint\limits_{c}^{d}p\left( x,y\right) dydx}
\end{eqnarray*}%
and%
\begin{eqnarray*}
&&\left\vert \frac{f\left( a,c\right) +f\left( c,d\right) +f\left(
b,c\right) +f\left( b,d\right) }{4}-\frac{\dint\limits_{a}^{b}\dint%
\limits_{c}^{d}f\left( x,y\right) p\left( x,y\right) dydx}{%
\dint\limits_{a}^{b}\dint\limits_{c}^{d}p\left( x,y\right) dydx}\right\vert
\\
&\leq &\frac{g\left( a,c\right) +g\left( c,d\right) +g\left( b,c\right)
+g\left( b,d\right) }{4}-\frac{\dint\limits_{a}^{b}\dint\limits_{c}^{d}g%
\left( x,y\right) p\left( x,y\right) dydx}{\dint\limits_{a}^{b}\dint%
\limits_{c}^{d}p\left( x,y\right) dydx}
\end{eqnarray*}%
where $p:\left[ a,b\right] \times \left[ c,d\right] \rightarrow 
\mathbb{R}
$ is positive, integrable and symmetric about $x=\frac{a+b}{2}$ and $y=\frac{%
c+d}{2}.$
\end{theorem}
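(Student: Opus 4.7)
The plan is to reduce everything to the Fejer-type inequality for co-ordinated convex functions (Theorem 2) applied to the two auxiliary convex mappings $g-f$ and $g+f$ produced by Lemma \ref{co-g-lemma2}. To keep the bookkeeping readable I would introduce the shorthand
$$M(h):=\frac{\dint_{a}^{b}\dint_{c}^{d}h(x,y)\,p(x,y)\,dy\,dx}{\dint_{a}^{b}\dint_{c}^{d}p(x,y)\,dy\,dx},\qquad S(h):=\frac{h(a,c)+h(c,d)+h(b,c)+h(b,d)}{4},$$
for any integrable $h:\Delta\rightarrow\mathbb{R}$. Both $M$ and $S$ are \emph{linear} functionals of $h$, and the weight $p$ is positive, integrable, and symmetric about the center of $\Delta$, which is exactly what Theorem 2 requires.

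For the midpoint inequality, I would apply Theorem 2 to the co-ordinated convex mapping $g-f$, obtaining $(g-f)\bigl(\tfrac{a+b}{2},\tfrac{c+d}{2}\bigr)\leq M(g-f)$, and independently to $g+f$, obtaining $(g+f)\bigl(\tfrac{a+b}{2},\tfrac{c+d}{2}\bigr)\leq M(g+f)$. By linearity of $M$ these two inequalities rearrange, respectively, to
$$M(f)-f\Bigl(\tfrac{a+b}{2},\tfrac{c+d}{2}\Bigr)\leq M(g)-g\Bigl(\tfrac{a+b}{2},\tfrac{c+d}{2}\Bigr),$$
$$f\Bigl(\tfrac{a+b}{2},\tfrac{c+d}{2}\Bigr)-M(f)\leq M(g)-g\Bigl(\tfrac{a+b}{2},\tfrac{c+d}{2}\Bigr).$$
Combining the two one-sided bounds yields the absolute-value inequality in the first half of the statement (after moving the $f$ and $g$ midpoint terms to the correct sides to match the displayed form).

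For the corner inequality the argument is completely parallel, this time using the right-hand side of Theorem 2: $M(g-f)\leq S(g-f)$ and $M(g+f)\leq S(g+f)$. Linearity of $M$ and $S$ reduces these to $S(f)-M(f)\leq S(g)-M(g)$ and $M(f)-S(f)\leq S(g)-M(g)$, which together give $|S(f)-M(f)|\leq S(g)-M(g)$, and this is exactly the second displayed inequality.

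There is no real analytic obstacle in the argument; the whole proof rests on two simple observations. First, one must check that Theorem 2 genuinely applies to $g-f$ and $g+f$, which is precisely the content of Lemma \ref{co-g-lemma2}. Second, the passage from two one-sided inequalities to a single absolute-value bound is immediate from the linearity of the functionals $M$ and $S$ in the integrand. The only point deserving a brief comment in the write-up is that $M$ is well defined because $\int_{a}^{b}\int_{c}^{d}p(x,y)\,dy\,dx>0$, which follows from positivity of $p$.
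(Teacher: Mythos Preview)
Your proposal is correct and follows essentially the same route as the paper: invoke Lemma~\ref{co-g-lemma2} to conclude that $g+f$ and $g-f$ are co-ordinated convex, apply the Fejer-type inequality (Theorem~2) to each, and combine the two resulting one-sided bounds into the absolute-value estimates using linearity of the weighted mean and the corner average. Your introduction of the functionals $M$ and $S$ makes the linearity step more transparent than in the paper, but the underlying argument is identical.
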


\begin{proof}
Since $f$ is $g-$convex dominated on the co-ordinates, from Lemma 2, the
mappings $f+g$ and $g-f$ are convex on the coordinates. By using Theorem 2,
we have:%
\begin{equation*}
\left( f+g\right) \left( \frac{a+b}{2},\frac{c+d}{2}\right) \leq \frac{%
\dint\limits_{a}^{b}\dint\limits_{c}^{d}\left( f+g\right) \left( x,y\right)
p\left( x,y\right) dydx}{\dint\limits_{a}^{b}\dint\limits_{c}^{d}p\left(
x,y\right) dydx}
\end{equation*}%
and%
\begin{eqnarray*}
&&\frac{\dint\limits_{a}^{b}\dint\limits_{c}^{d}\left( f+g\right) \left(
x,y\right) p\left( x,y\right) dydx}{\dint\limits_{a}^{b}\dint%
\limits_{c}^{d}p\left( x,y\right) dydx} \\
&\leq &\frac{\left( f+g\right) \left( a,c\right) +\left( f+g\right) \left(
c,d\right) +\left( f+g\right) \left( b,c\right) +\left( f+g\right) \left(
b,d\right) }{4}.
\end{eqnarray*}%
If we re-arrange these inequalities, we obtain:%
\begin{eqnarray*}
&&\dint\limits_{a}^{b}\dint\limits_{c}^{d}g\left( x,y\right) p\left(
x,y\right) dydx-g\left( \frac{a+b}{2},\frac{c+d}{2}\right) \\
&\leq &f\left( \frac{a+b}{2},\frac{c+d}{2}\right) -\frac{\dint%
\limits_{a}^{b}\dint\limits_{c}^{d}f(x,y)p\left( x,y\right) dydx}{%
\dint\limits_{a}^{b}\dint\limits_{c}^{d}p\left( x,y\right) dydx} \\
&\leq &g\left( \frac{a+b}{2},\frac{c+d}{2}\right) -\frac{\dint%
\limits_{a}^{b}\dint\limits_{c}^{d}g\left( x,y\right) p\left( x,y\right) dydx%
}{\dint\limits_{a}^{b}\dint\limits_{c}^{d}p\left( x,y\right) dydx}
\end{eqnarray*}%
and%
\begin{eqnarray*}
&&\frac{\dint\limits_{a}^{b}\dint\limits_{c}^{d}g\left( x,y\right) p\left(
x,y\right) dydx}{\dint\limits_{a}^{b}\dint\limits_{c}^{d}p\left( x,y\right)
dydx}-\frac{g\left( a,c\right) +g\left( c,d\right) +g\left( b,c\right)
+g\left( b,d\right) }{4} \\
&\leq &\frac{f\left( a,c\right) +f\left( c,d\right) +f\left( b,c\right)
+f\left( b,d\right) }{4}-\frac{\dint\limits_{a}^{b}\dint\limits_{c}^{d}f%
\left( x,y\right) p\left( x,y\right) dydx}{\dint\limits_{a}^{b}\dint%
\limits_{c}^{d}p\left( x,y\right) dydx} \\
&\leq &.\frac{g\left( a,c\right) +g\left( c,d\right) +g\left( b,c\right)
+g\left( b,d\right) }{4}-\frac{\dint\limits_{a}^{b}\dint\limits_{c}^{d}g%
\left( x,y\right) p\left( x,y\right) dydx}{\dint\limits_{a}^{b}\dint%
\limits_{c}^{d}p\left( x,y\right) dydx}.
\end{eqnarray*}%
Which completes the proof.
\end{proof}

\end{document}